\newtheorem{theorem}{Theorem}[section]
\newtheorem{lemma}{Lemma}[section]
\newtheorem{proposition}{Proposition}[section]
\newtheorem{cor}{Corollary}[section]
\theoremstyle{definition}
\newtheorem*{remark}{Remark}
\newtheorem*{example}{Example}
\begin{document}

\title{Representation fields for orders of small ranks}


\author{\sc Luis Arenas-Carmona}


\newcommand\Q{\mathbb Q}
\newcommand\alge{\mathfrak{A}}
\newcommand\Da{\mathfrak{D}}
\newcommand\Ha{\mathfrak{H}}
\newcommand\oink{\mathcal O}
\newcommand\matrici{\mathbb{M}}
\newcommand\Txi{\lceil}
\newcommand\ad{\mathbb{A}}
\newcommand\enteri{\mathbb Z}
\newcommand\finitum{\mathbb{F}}
\newcommand\bbmatrix[4]{\left(\begin{array}{cc}#1&#2\\#3&#4\end{array}\right)}

\maketitle

\begin{abstract}
A representation field for a non-maximal order $\Ha$ in a central
simple algebra $\alge$ is a subfield of the spinor class field of maximal
orders which determines the set of spinor genera of maximal orders
representing $\Ha$. In our previous work we have proved the existence
 of the representation field for several important families of suborders, like commutative orders,
 while we have also found examples where
the representation field fails to exist.  In this article, we prove that the representation field is defined
for any order $\Ha$ of rank $r\leq7$. The same technique yields the existence of representation fields  for any order
 in an algebra whose semi-simple reduction is commutative. We also construct a rank-8 order whose representation field is not defined.
\end{abstract}

\bigskip
\section{Introduction}

Let $K$ be a global field. Let $\alge$ be a central simple
$K$-algebra ($K$-CSA or CSA when $K$ is clear from the context).
Let $\oink=\oink_{K,S}$ be the ring of $S$ integers in $K$, for some finite set $S$
of places in $K$ containing the archimedean places if any. Finally, let $\Sigma$ be the spinor
class field for the set $\mathbb{O}$ of maximal $S$-orders in $\alge$
as defined in \cite{abelianos}, i.e., $\Sigma/K$ is an
abelian extension that classifies maximal orders of $\alge$ into spinor genera, in
the sense that there exists an explicit map
$$\rho:\mathbb{O}\times\mathbb{O}\rightarrow\mathrm{Gal}(\Sigma/K),$$
with the following properties:
\begin{enumerate}
\item $\Da$ and $\Da'$ are in the same spinor genus if and only if
$\rho(\Da,\Da')=\mathrm{Id}_\Sigma$, \item
$\rho(\Da,\Da'')=\rho(\Da,\Da')\rho(\Da',\Da'')$, for any triple
$(\Da,\Da',\Da'')\in\mathbb{O}^3$,
\end{enumerate}
\cite[\S1]{ab2}. The importance of this concept lies in the fact that spinor genera and conjugacy
classes coincide whenever the group $\alge^*$ has strong aproximation with respect to the set $S$,
e.g., when $S$ is the set of archimedean places on a number field and $\alge_\wp$ is not the real quaternion
division algebra for at least one place $\wp\in S$. This happens in particular, for a number field $K$, when the dimension of $\alge$ is larger
than $4$. In this case, the spinor class field gives much information on the set $\alge^*\backslash\mathbb{O}$ of conjugacy classes
of maximal $S$-orders, e.g., the number of such conjugacy classes is $|\alge^*\backslash\mathbb{O}|=[\Sigma:K]$.
 The set of spinor genera of maximal orders also plays an important role in the description of
a fundamental set for the action, of certain arithmethically interesting subgroups of the projective general linear group $\mathrm{PGL}_2(K)$, on some local Bruhat-Tit trees, when $K$ is a global function field \cite{graphsandsheaves}.
Furthermore, for any suborder $\Ha$ there
exists a lower representation field $F=F_-(\Ha)$, which is the
largest subfield satisfying $$\Ha\subseteq\Da\cap\Da'\implies
\rho(\Da,\Da')|_F=\mathrm{Id}_F,\quad\forall(\Da,\Da')\in\mathbb{O}^2.$$ In \cite{ab2}
we gave an explicit description of this field that is valid for an
arbitrary order in $\alge$. For some families of suborders, the field $F$ has also the
 following property:
 \begin{quote}
Property \textbf{RFD}: \emph{If $\Ha\subseteq\Da$ and $\rho(\Da,\Da')|_F=\mathrm{Id}_F$, then
$\Ha$ is contained in some order in the spinor genus of $\Da'$.}
\end{quote}
If Property \textbf{RFD} holds, we call $F$ the representation field $F(\Ha)$ of $\Ha$ and say
that the representation field for $\Ha$ is defined (or exists). In this case the number of spinor genera representing
$\Ha$ is $[\Sigma:F]$. In
\cite{eichler} we found a  rank-$9$ order in a $9$-dimensional CSA for which the
representation field is not defined. Here we prove an existential result in the opposite direction:
\begin{theorem}\label{uno}
Let $K$ be a global field, let $\alge$ be a $K$-CSA, and let $S$ be a  non-empty finite set
of places in $K$ containing the archimedean places if any. Then
the representation field $F(\Ha)$ is defined for any $S$-order $\Ha\subseteq\alge$ whose rank does
not exceed $7$.\end{theorem}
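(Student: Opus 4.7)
The argument splits naturally into three phases: a global-to-local reduction of Property \textbf{RFD}, a classification of the local semi-simple reductions that can occur when the rank is at most $7$, and case analysis in each possibility.

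The first step is to recast \textbf{RFD} as a surjectivity statement and localize it. Using the adelic description of $\rho$ from \cite{abelianos}, the subgroup $N_\Ha \subseteq \mathrm{Gal}(\Sigma/K)$ generated by spinor norms of elements $g\in\alge^*_\ad$ that conjugate one maximal order containing $\Ha$ to another such order always lies inside $\mathrm{Gal}(\Sigma/F_-(\Ha))$; Property \textbf{RFD} is exactly the claim that equality holds. The explicit description of $F_-(\Ha)$ from \cite{ab2} identifies this target Galois group as the image of a slightly larger adelic subgroup, so the problem factors place by place: at each non-archimedean $\wp$ one must show that the local stabilizer of $\Ha_\wp$ already surjects onto the relevant local spinor-norm component of $\mathrm{Gal}(\Sigma/F_-(\Ha))$.

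The second step is a structural classification of $\bar{\Ha}_\wp := \Ha_\wp/\mathrm{rad}(\Ha_\wp)$, a semi-simple algebra over the finite residue field $k(\wp)$ of $k(\wp)$-dimension at most $\mathrm{rank}(\Ha)\leq 7$. By Wedderburn's structure theorem combined with Wedderburn's little theorem (which rules out finite non-commutative division rings), $\bar{\Ha}_\wp \cong \prod_i M_{n_i}(\ell_i)$ with each $\ell_i/k(\wp)$ finite. A non-commutative factor forces $n_i^2[\ell_i:k(\wp)]\geq 4$, and the dimension budget then leaves room for at most one such factor, which must be $M_2(k(\wp))$ itself. Hence locally there are exactly two possibilities: either $\bar{\Ha}_\wp$ is commutative, or $\bar{\Ha}_\wp \cong M_2(k(\wp))\times C_\wp$ with $C_\wp$ an étale commutative $k(\wp)$-algebra of dimension at most $3$.

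The third step handles each case. When $\bar{\Ha}_\wp$ is commutative, I would invoke the paper's main technique (which also yields the side statement announced in the abstract for any order whose semi-simple reduction is commutative): commutative local suborders are controlled by their embedding invariants into the residues of maximal orders, and the spinor-class-field formalism of \cite{abelianos} then gives the local surjectivity explicitly. The main obstacle is the remaining case, where the $M_2(k(\wp))$ factor is present. Here I would lift the two orthogonal idempotents of that factor through the radical to idempotents $e,1-e\in\Ha_\wp$, producing an explicit $2\times 2$-matrix presentation of $\Ha_\wp$ over a commutative local ring. This forces the ambient $\Da_\wp$ to split as a matrix algebra and reduces the local embedding question to that of the commutative diagonal corner $e\Ha_\wp e\oplus(1-e)\Ha_\wp(1-e)$, bringing one back to the first case. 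Patching the local witnesses together adelically then produces the global element establishing \textbf{RFD} for $\Ha$, so that $F_-(\Ha)$ has Property \textbf{RFD} and $F(\Ha)$ is defined.
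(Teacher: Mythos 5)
The decisive gap is in your commutative case, and it comes from running the case analysis on the residual algebra $\bar\Ha_\wp=\Ha_\wp/\mathrm{rad}(\Ha_\wp)$ rather than on the $K$-algebra $L=K\Ha$. Commutativity of $\bar\Ha_\wp$ neither makes $\Ha_\wp$ commutative nor implies that $H_\wp(\phi,\Da|\Ha)$ is a group, so the techniques for commutative orders (or for orders whose maximal semisimple quotient of $K\Ha$ is commutative, as in Theorem \ref{t2} --- note that this is a condition on the rational algebra $L$, not on the mod-$\wp$ reduction) cannot simply be invoked there. Concretely, the rank-$8$ order of Theorem \ref{t3}, given locally by (\ref{mord}), has residual algebra isomorphic to $\mathbb{K}_\wp\times\mathbb{E}_\wp$ with $\mathbb{E}_\wp/\mathbb{K}_\wp$ quadratic: commutative, of dimension $3$, hence indistinguishable in your classification from the reduction of a rank-$\le 7$ order; yet its local spinor image consists of the norm classes with valuation $\equiv 0,2,3\pmod 4$ and is not a group. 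The residual criterion that actually works (the last assertion of Lemma \ref{l4}) is not commutativity but equality of the dimensions of all irreducible representations of $\mathbb{H}_\wp$, and this fails precisely for products of residue-field extensions of different degrees such as $\mathbb{K}_\wp\times\mathbb{E}_\wp$; moreover the Proposition in \S5 shows that the residual algebra alone can never decide whether the spinor image is a group. So the ``local surjectivity'' you claim in the commutative case is unproved, and any proof of it that only sees $\bar\Ha_\wp$ would contradict Theorem \ref{t3}.

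The rank hypothesis has to be exploited over $K$, not over the residue field, and that is what the paper does: decompose $\phi:L\to\alge$, $L=K\Ha$, into irreducible components via a composition series of the associated $(L,B)$-bimodule; Lemma \ref{l3} (block generators as in (\ref{blocks2}) with suitably chosen exponents $t_i$) together with the Proposition of \S3 shows that if the representation field is defined for every component then it is defined for $\phi$, the spinor image being the product of the component images. Each component has simple image of $K$-dimension at most $7$, hence is either a field --- where the commutative-order theorem of \cite{abelianos} applies --- or lies in a quaternion algebra, where $\mathbb{D}_\wp$ is a quadratic extension of $\mathbb{K}_\wp$ or $\matrici_2(\mathbb{K}_\wp)$, so all irreducible representations of $\mathbb{E}_\wp\mathbb{H}_\wp$ have a common dimension and Lemma \ref{l4} applies. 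Your noncommutative case also contains an unsupported step: lifting the idempotents of the $\matrici_2(k(\wp))$-factor does not force $\Da_\wp$ to be a split matrix ring (the centralizer may still be a nontrivial division algebra), and passing to the corner $e\Ha_\wp e\oplus(1-e)\Ha_\wp(1-e)$ discards exactly the off-diagonal congruence data on which the spinor image depends --- as the computation for (\ref{mord}) again illustrates --- so that reduction would itself require an argument of the type of Lemma \ref{l3} plus the reverse containment via Lemma \ref{l4}, not a Morita-style corner passage.
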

The existence of the
representation field has been proved for several important families
of orders. In \cite{abelianos}, we proved the existence of the representation field whenever
$\Ha$ is commutative, extending previous results of Chevalley \cite{Chevalley}, Chinburg and Friedman \cite{FriedmannQ}, or Linowitz and Shemanske \cite{lino2}.  The same technique used in the proof of Theorem \ref{uno} yields the following generalization:
\begin{theorem}\label{t2}
Let $K$ be a global field, let $\alge$ be a $K$-CSA, and let $S$ be a  non-empty finite set
of places in $K$ containing the archimedean places if any.
Let $\Ha\subseteq\alge$ be an $S$-order.
If the maximal semisimple quotient of the algebra $L=
K\Ha$ is commutative, then the representation field $F(\Ha)$ is defined. 
\end{theorem}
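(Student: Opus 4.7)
The plan is to reduce Theorem \ref{t2} to the commutative case, already established in \cite{abelianos}, by way of a Wedderburn--Malcev decomposition of $L = K\Ha$. First I would set $J = \mathrm{rad}(L)$ and note that by hypothesis $E := L/J$ is a commutative \'etale $K$-algebra; Wedderburn--Malcev then furnishes a commutative semisimple $K$-subalgebra $L_0 \subseteq L$ projecting isomorphically onto $E$, so that $L = L_0 \oplus J$ as $K$-vector spaces and the associated projection $\pi : L \to L_0$ along $J$ is a $K$-algebra homomorphism (as one checks using that $J$ is a two-sided ideal and $L_0$ is a subalgebra). I would then take $\Ha_0 := \pi(\Ha) \subseteq L_0$, which is a commutative $S$-order since $\pi$ is multiplicative on the decomposition and $\pi(1) = 1$.

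The heart of the argument would be to show that, for any fixed maximal order $\Da_0 \in \mathbb{O}$, the adelic embedding sets $X(\Ha) = \{g \in \alge_\ad^{*} : g^{-1}\Ha g \subseteq \Da_0\}$ and $X(\Ha_0) = \{g \in \alge_\ad^{*} : g^{-1}\Ha_0 g \subseteq \Da_0\}$ (with containments interpreted locally at every place) have identical images under the adelic spinor norm. The inclusion $X(\Ha) \subseteq X(\Ha_0)$ is immediate, so the substance is in the reverse direction. Given $g \in X(\Ha_0)$, I would construct $h \in \alge_\ad^{*}$ of trivial spinor norm, lying in the local stabilizer of $g^{-1}\Ha_0 g$, such that $gh \in X(\Ha)$. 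The local centralizer of a commutative semisimple order in $\alge_\wp^{*}$ is a product of unit groups of matrix algebras over local division rings (one factor per local component of $E$), and it should be large enough to conjugate the nilpotent part of $g^{-1}\Ha g$ into $\Da_0$ while preserving the spinor norm.

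The equality of spinor norm images, combined with the explicit description of $F_-$ from \cite{ab2}, would then force $F_-(\Ha) = F_-(\Ha_0)$, and simultaneously identify the sets of spinor genera of maximal orders representing $\Ha$ and $\Ha_0$. Applying \cite{abelianos} to the commutative order $\Ha_0$ yields property \textbf{RFD} for $\Ha_0$, which by this identification transfers to $\Ha$. The main obstacle I anticipate is the centralizer computation in the middle paragraph: specifically, verifying that the local centralizer of a commutative semisimple order always contains elements of trivial spinor norm sufficient to move any prescribed nilpotent into a given maximal order. This is precisely the point at which the commutativity of $E$ becomes essential --- were $E$ non-commutative, the local centralizer would decompose into smaller blocks exhibiting genuine spinor obstructions, in line with the rank-$8$ counterexample promised in the abstract.
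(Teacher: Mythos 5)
The central step of your argument --- that $X(\Ha)$ and $X(\Ha_0)$, where $\Ha_0=\pi(\Ha)$, have the same image under the adelic spinor norm --- is not established, and the half you call ``immediate'' is actually false. Since $\Ha_0=\pi(\Ha)$ is not contained in $\Ha$, a maximal order containing $g^{-1}\Ha g$ need not contain $g^{-1}\Ha_0 g$. Already locally this fails: take $L$ the upper-triangular algebra in $\matrici_2(K)$ (so $L/J\cong K\times K$ is commutative and $L_0$ is the diagonal), and at one place let $\Ha_\wp=\oink_\wp+\oink_\wp x+\oink_\wp n$ with $x=\bbmatrix{1}{\pi^{-1}}{0}{0}$ and $n=\bbmatrix{0}{1}{0}{0}$; this is an order ($x^2=x$, $n^2=nx=0$, $xn=n$). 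The lattice $M=\oink_\wp e_1+\oink_\wp(-\pi^{-1}e_1+e_2)$ is invariant under $x$ and $n$, so $\Ha_\wp\subseteq\mathrm{End}(M)$, but $\pi(x)=\mathrm{diag}(1,0)$ sends $-\pi^{-1}e_1+e_2$ to $-\pi^{-1}e_1\notin M$, so $(\Ha_0)_\wp\not\subseteq\mathrm{End}(M)$. Hence neither inclusion between $X(\Ha)$ and $X(\Ha_0)$ comes for free, and the direction you do flag as the obstacle (producing spinor-norm-trivial conjugations that absorb the radical part into a prescribed maximal order) is exactly the hard content of the theorem and is left as a sketch; the centralizer of the commutative order $\Ha_0$ does not by itself control the off-diagonal (radical) blocks of $g^{-1}\Ha g$.

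What is salvageable is the equality of \emph{lower} fields: since the image of $\Ha_\wp\cap J$ in $\Ha_\wp/m_\wp\Ha_\wp$ is a nil, hence nilpotent, ideal, the residual algebras of $\Ha$ and $\Ha_0$ have the same irreducible representations, so $F_-(\Ha)=F_-(\Ha_0)$ by Lemma~\ref{l4} and Lemma~\ref{l4b}. But property \textbf{RFD} concerns the spinor image as a \emph{set}, not the group it generates, so it does not transfer along an equality of lower fields --- this set-versus-group distinction is precisely what drives Proposition 5.1 and the counterexample of Theorem~\ref{t3}. The paper's proof handles the radical differently: it triangularizes the $(L,B)$-bimodule $B^f$ along a composition series, observes that each irreducible component $\phi_i$ has simple, hence (under your hypothesis) commutative image, and proves the product formula $H(\phi,\Da|\Ha)=\prod_i H(\phi_i,\Da_i|\Ha)$ by exhibiting block-diagonal generators $\mathrm{diag}(\pi^{t_1}u_1,\dots,\pi^{t_r}u_r)$ with the $t_i$ chosen large, without changing their classes modulo $f$, so that the off-diagonal radical blocks become integral (Lemma~\ref{l3}), the reverse containment coming from Lemma~\ref{l4}; the commutative case of \cite{abelianos} is then applied componentwise. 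If you insist on reducing to $\Ha_0$ via Wedderburn--Malcev, you would in effect have to reprove this scaling lemma, since that is the only mechanism offered for moving the nilpotent part into a given maximal order without altering spinor norms.
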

On the other hand, The bound $7$ in Theorem \ref{uno} is optimal, as shown by next result: 
\begin{theorem}\label{t3}
Let $K$ be a number field whose ideal class group has an element of order $4$.
 Then there exists a quadratic extension $F/K$ and an order $\Ha$
of maximal rank in $L=\matrici_2(F)\subseteq\matrici_4(K)$ for which the representation field is not defined.
\end{theorem}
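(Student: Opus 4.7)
The plan is to exploit the order-$4$ hypothesis on $[\mathfrak{a}] \in \mathrm{Cl}(K)$ through a carefully chosen quadratic extension $F/K$, inside which $\mathfrak{a}^2$ becomes principal but $\mathfrak{a}$ itself does not. Concretely, since $[\mathfrak{a}]^4 = 1$, write $\mathfrak{a}^4 = (\alpha)$ with $\alpha \in K^*$ (adjusting $\alpha$ modulo $(K^*)^2$ if necessary so that $F = K(\sqrt{\alpha})$ is a proper quadratic extension). The element $\beta = \sqrt\alpha \in F$ satisfies $(\beta) = \mathfrak{a}^2 \oink_F$, so the extended ideal $\mathfrak{A} := \mathfrak{a}\oink_F$ has order exactly $2$ in $\mathrm{Cl}(F)$: its square is principal by construction, while it is itself non-principal since $\mathrm{N}_{F/K}(\mathfrak{A}) = \mathfrak{a}^2$ is non-principal in $\mathrm{Cl}(K)$. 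The freedom to modify $\alpha$ by a unit also lets us prescribe the local behavior of $F/K$ at an auxiliary prime $\mathfrak{p}$ of $K$ with $[\mathfrak{p}] = [\mathfrak{a}]$ (such a $\mathfrak{p}$ exists by Chebotarev).

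Inside $L = \matrici_2(F) \subseteq \matrici_4(K)$, I would take $\Ha$ to be a rank-$8$ order built from $\mathfrak{A}$ and, if needed, from an auxiliary prime $\mathfrak{P}$ of $F$ above $\mathfrak{p}$. The natural starting point is the symmetric order $\bbmatrix{\oink_F}{\mathfrak{A}}{\mathfrak{A}}{\oink_F}$, which is a ring thanks to $\mathfrak{A}^2 \subseteq \oink_F$; a refinement of the form $\bbmatrix{\oink_F}{\mathfrak{A}}{\mathfrak{A}\mathfrak{P}}{\oink_F}$ encodes the class $[\mathfrak{a}]$ itself and not only $[\mathfrak{a}]^2$. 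Since $\matrici_4(K)$ has strong approximation at $S$, spinor genera of maximal orders coincide with conjugacy classes, and the latter are parameterized by the Steinitz class in $\mathrm{Cl}(\oink_K) = \mathrm{Gal}(\Sigma/K)$ of the rank-$4$ projective $\oink_K$-lattice $N \subseteq F^2 = K^4$ underlying each maximal order.

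The key computation is the enumeration of $\Ha$-stable lattices. Because $\Ha$ contains the diagonal idempotents $\oink_F \cdot e_{ii}$, every such $N$ is axis-aligned of the form $\mathfrak{b}_1 e_1 \oplus \mathfrak{b}_2 e_2$ for fractional $\oink_F$-ideals $\mathfrak{b}_i$, and the off-diagonal stability conditions force the ratio $\mathfrak{c} = \mathfrak{b}_2 \mathfrak{b}_1^{-1}$ to lie in a finite interval of fractional $\oink_F$-ideals determined by the off-diagonal entries of $\Ha$. One then computes the Steinitz class over $\oink_K$ as a fixed discriminant constant times $[\mathrm{N}_{F/K}(\mathfrak{b}_1)]^2 \cdot [\mathrm{N}_{F/K}(\mathfrak{c})]$, and reads off the set $A \subseteq \mathrm{Cl}(K)$ of spinor genera of maximal orders containing $\Ha$ as $\mathrm{N}_{F/K}(\mathrm{Cl}(F))^2$ translated by the finite set $C = \{[\mathrm{N}_{F/K}(\mathfrak{c})]\}$.

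The crux — and the main obstacle — is to verify that this $A$ is genuinely not a coset of any subgroup of $\mathrm{Cl}(K)$, which is exactly what property \textbf{RFD} demands. The essential use of the order-$4$ hypothesis is that, for an appropriate refined $\Ha$, the set $C$ contains $1$, $[\mathfrak{a}]^{\pm 2}$ and $[\mathfrak{a}]$ (but, by design, not $[\mathfrak{a}]^3$), so modulo $\mathrm{N}_{F/K}(\mathrm{Cl}(F))^2$ the set $A$ reduces to a subset of $\langle [\mathfrak{a}]\rangle \cong \mathbb{Z}/4$ of cardinality $3$; since $3$ does not divide $4$, such a set cannot be a coset of any subgroup. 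The delicate point is to rule out accidental collapse: one must coordinate the choice of $F$, of $\mathfrak{A}$, and of $\mathfrak{P}$ so that $[\mathfrak{a}] \notin \mathrm{N}_{F/K}(\mathrm{Cl}(F))^2$, preserving the asymmetry. This is precisely where order $4$ (rather than order $2$) is indispensable, consistent with the sharpness claimed relative to Theorem~\ref{uno}.
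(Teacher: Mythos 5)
Your construction cannot work, and the step you yourself flag as ``the crux'' is not merely unverified --- it is false for every order of the shape you propose. Any order $\Ha=\bbmatrix{\oink_F}{\mathfrak{A}}{\mathfrak{B}}{\oink_F}$ with $\mathfrak{A}\mathfrak{B}\subseteq\oink_F$ (your symmetric order and your refinement $\mathfrak{B}=\mathfrak{A}\mathfrak{P}$ alike) contains the diagonal idempotents and the full diagonal copy of $\oink_F\oplus\oink_F$; this is exactly the feature you use to enumerate stable lattices, and it is what dooms the attempt. At every place $v$ of $K$ an $\Ha_v$-stable lattice is $M_1\oplus M_2$ with each $M_i$ a fractional $\oink_{F,v}$-ideal, and computing the valuations of the transition determinants one finds that the local spinor image $H_v(\phi,\Da|\Ha)$, taken modulo $k^{*4}\oink_k^*$, is always a subgroup: at a place inert or ramified in $F$ it is contained in $\{\bar0,\bar2\}$ and contains $\bar 0$, and at a place where the off-diagonal ideals have odd valuation (e.g.\ under your $\mathfrak{P}$, split case) it is all of $\enteri/4\enteri$. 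Hence $H(\phi,\Da|\Ha)=J_K\cap\prod_v H_v(\phi,\Da|\Ha)$ is a group, $K^*H(\phi,\Da|\Ha)$ is a group, and the representation field of your $\Ha$ \emph{is} defined, no matter how you coordinate $F$, $\mathfrak{A}$ and $\mathfrak{P}$. In your class-group language, the set $A$ of classes of maximal orders containing $\Ha$ is automatically a coset; in particular the asymmetry you want in $C$ (containing $[\mathfrak{a}]$ but not $[\mathfrak{a}]^3$) cannot be produced by a choice of fractional ideals alone. (A minor further slip: conjugacy classes of maximal orders in $\matrici_4(K)$ are parameterized by $\mathrm{Cl}(K)$ modulo fourth powers, i.e.\ by $\mathrm{Gal}(\Sigma/K)$, not by the full class group.)

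The paper's proof places the obstruction at a single prime and destroys precisely the $\oink_F$-module structure that you retain. Since $\mathrm{Cl}(K)$ has an element of order $4$, there is a cyclic unramified degree-$4$ extension $F'/K$ inside the Hilbert class field and a prime $\wp$ with $f_\wp(F'/K)=4$; one takes $F$ to be the quadratic subfield of $F'$, lets $\Ha$ be maximal in $\matrici_2(F)$ at all places except $\wp$, and at $\wp$ takes $\Ha_\wp=\bbmatrix{\oink_k 1_E}{\oink_E}{0}{\oink_E}+\pi\matrici_2(\oink_E)$, whose upper-left entry is $\oink_k+\pi\oink_E$ rather than $\oink_E$. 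A direct enumeration of the $\Ha_\wp$-stable lattices shows that $H_\wp(\phi,\Da|\Ha)$ has norm-valuations $\{\bar0,\bar2,\bar3\}\subseteq\enteri/4\enteri$, which is not a group, while every other place contributes trivially on $F'$; the image of $H(\phi,\Da|\Ha)$ in $\mathrm{Gal}(F'/K)$ is $\{\mathrm{id},\sigma^2,\sigma^3\}$, so the lower representation field meets $F'$ trivially while the upper one contains $F'$, and the representation field is undefined. To salvage your approach you would have to build such a ``broken diagonal'' congruence condition at an inert prime whose Frobenius has order $4$ into your order; the global ideal-theoretic data you use cannot create a non-group local spinor image.
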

Note that $\matrici_2(F)$ is identified with a sub-ring of $\matrici_4(K)$ via the natural representation of 
$\matrici_2(F)$ on $F^2\cong K^4$.

\section{Representation fields for representations}

Let $K$ be  a  field, and let $B$ be a finite dimensional central division $K$-algebra. Let $L$ be an arbitrary finite
 dimensional $K$-algebra. By a $B$-representation of $L$ we mean a $K$-algebra homomorphism 
$\phi:L\rightarrow \matrici_n(B)$. For any
such $\phi$, the abelian group $B^n=B\times\cdots\times B$, regarded as the space of column vectors, 
has a natural $(L,B)$-bimodule structure given by the products  $ l\cdot v*b=\phi(l)vb$, for any 
$(l,v,b)\in L\times B^n\times B$. We recall a few well known facts from representation theory:
\begin{itemize}
\item A $B$-subspace $W\subseteq B^n$ is $L$-invariant if and only if it is an $(L,B)$-sub-bimodule.
\item For any chain of $(L,B)$-sub-bimodules $\{0\}=M_0\subseteq M_1\subseteq\cdots\subseteq  M_r=B^n$ there exists
a $B$-basis $\mathcal{S}$ of $B^n$, such that the matrix of $\phi(l)$,  for any element $l\in L$,
  with respect to the basis $\mathcal{S}$, has a block decomposition of the form:
\begin{equation}\label{blocks}
\left(\begin{array}{cccc}a_{11}&a_{12}&\cdots&a_{1r}\\0&a_{22}&\cdots&a_{2r}\\
\vdots&\vdots&\ddots&\vdots\\0&0&\cdots&a_{rr}\end{array}\right),\end{equation} where
each $a_{ij}=a_{ij}(l)$ is a block with $\mathrm{dim}_B(M_i)$ rows and $\mathrm{dim}_B(M_j)$ columns.
The map $l\mapsto a_{ii}(l)$ is the representation corresponding to the bi-module $\tilde{M}_i=M_i/M_{i-1}$. 
\item If $M=B^n$ has no non-trivial proper sub-bimodules, or equivalently, if the representation $\phi$ is irreducible,
then $\phi(L)$ is a simple algebra.
\item Given two representations, $\phi,\psi:L\rightarrow\matrici_n(B)$, there exists $a\in\matrici_n(B)^*$ satisfying $\psi(l)=a\phi(l)a^{-1}$ for every element $l$ in $L$, if and only if the bimodules
defined by $\phi$ and $\psi$ are isomorphic.
\end{itemize}

In the remaining of this section $K$, $\alge$, $\oink$ are as in the introduction. 
 Let $\Pi(K)$ be the set of all places, both archimedean and non-archimedean, in $K$. 
Let $S\subseteq\Pi(K)$ be a finite nonempty set, containing the archimedean places, if any, and let 
$U=\Pi(K)-S$. We refer to $U$ and $S$, respectively, as the set of finite and infinite 
places of $K$. Every definition that follows can be extended to the projective case, where $K$ is a global function
field, $S=\emptyset$, while lattices and orders can be interpreted in a sheaf-theoretical context 
(see the remark at the end of \S\ref{rmk1}). For every place $\wp$ we let $I^\wp$ be the maximal ideal corresponding to $\wp$.
 Note that
$m_\wp=I^\wp_\wp$ is the maximal order of the local ring $\oink_\wp\subseteq K_\wp$.

In all that follows, $\Ha$ is an $S$-order on a finite dimensional $K$-algebra $L$, i.e.,  a lattice
$\Ha\subseteq L$ satisfying $K\Ha=L$. Let $\phi:L\rightarrow\alge$ be a
representation of $L$ in a central simple $K$-algebra $\alge$. Let $L_\ad$ be the adelization of $\alge$, i.e., 
$$L_\ad=\left\{a\in\prod_{\wp\in\Pi(K)}L_\wp\Big| a_\wp\in\Ha_\wp\ \textnormal{for almost all }\wp\in U\right\}.$$
As usual, this definition is independent of the choice of the order $\Ha$ on $L$. 
The rings $\alge_\ad$ and $\ad=K_\ad$ are defined analogously. We adopt the convention that $\Ha_\wp=L_\wp$
for $\wp\in S$, and define $\Ha_\ad=\prod_{\wp\in \Pi(K)}\Ha_\wp$. The definition of $\Da_\ad$, for an order $\Da$ on $\alge$ is analogous. Let
$J_K=\ad^*$.  Note that
$\phi(\Ha)$ is an order in $\alge$, whence we can define the
global spinor image, for any maximal $S$-order
$\Da$ on $\alge$ containing $\phi(\Ha)$, by either of the following equivalent formulas \cite{abelianos}:
$$\begin{array}{rcl}H(\phi,\Da|\Ha)&=&\left\{N(a)|a\in\alge^*_\ad,\, a\phi(\Ha_\ad)a^{-1}\subseteq\Da_\ad\right\},\\
H(\phi,\Da|\Ha)&=&J_K\cap\prod_{\wp\in\Pi(K)} H_\wp(\phi,\Da|\Ha),\end{array}$$
  where $N$ is the reduced norm, and   the local spinor image $H_\wp(\phi,\Da|\Ha)$ is defined by
$$H_\wp(\phi,\Da|\Ha)=\left\{N(a)|a\in\alge_\wp,\, a\phi(\Ha_\wp)a^{-1}\subseteq\Da_\wp\right\}\subseteq K_\wp^*.$$
In all that follows we assume that $\Da$ is maximal.
Note that, when  $\phi(\Ha)$ is contained in a second maximal order $\Da'=a\Da a^{-1}$, then $H(\phi,\Da'|\Ha)=
a^{-1} H(\phi,\Da|\Ha)$, and both sets contain the identity. In particular, both  sets
$H(\phi,\Da'|\Ha)$ and $H(\phi,\Da|\Ha)$ generate the same group
$\Gamma(\phi,\Ha)$. The class field $F=F_-(\phi,\Ha)$
corresponding to $\Gamma(\phi,\Ha)K^*$ is called the lower
representation field. Let $\Sigma$  denote the spinor class field of maximal orders, and let
$\rho:\mathbb{O}^2\rightarrow\mathrm{Gal}(\Sigma/K)$ be the distance function
defined in \cite[\S2]{abelianos}, i.e.,  $\rho(\Da,\Da')=[N(a),\Sigma/K]$, where $\Da'=a \Da a^{-1}$ and
$t\mapsto [t,\Sigma/K]$ is the artin map on ideles. Then $F$ is the smallest subfield
satisfying $$\rho(\Da,\Da')|_F=id\implies \phi(\Ha)\subseteq
a\Da'a^{-1}\textnormal{ for some }a\in\alge.$$
Furthermore, the converse implication holds whenever $H(\phi,\Da|\Ha)K^*$ is a group. In the latter case we say that the representation field is defined (and equals $F$).
Next lemma gives an explicit description of $F_-(\phi,\Ha)$ for any order
$\Ha$ and any representation $\phi$. It follows by replacing $\Ha$ by $\phi(\Ha)$  in \cite[Lem.2.1]{ab2}:

\begin{lemma}\label{l4}  Let $\Ha$ be an order and let
$\phi:\Ha\rightarrow\alge$ be a representation satisfying
$\phi(\Ha)\subseteq\Da$ for some maximal order $\Da$  in the CSA
$\alge$.  For every place $\wp\in U$, with maximal order $m_\wp\subseteq\oink_\wp$ and residue field $\mathbb{K}_\wp$,
we let $J_\wp$ be the only maximal two-sided ideal of
$\Da_\wp$ containing $m_\wp1_{\Da_\wp}$, and let $\mathbb{H}_\wp$ be the
image of $\Ha_\wp$ in $\mathbb{D}_\wp=\Da_\wp/J_\wp$. Let $\mathbb{E}_\wp$
be the center of the ring $\mathbb{D}_\wp$. Let $t_\wp$ be the
greatest common divisor of the dimensions of the irreducible
$\mathbb{E}_\wp$-representations of the $\mathbb{K}_\wp$-algebra
$\mathbb{H}_\wp$. Then the lower representation
$F_-(\phi,\Ha)$ is the maximal subfield $F$, of the spinor class
field $\Sigma$, whose inertia degree $f_\wp(F/K)$ divides
$t_\wp$ for every place $\wp$. Furthermore, if every irreducible
$\mathbb{E}_\wp$-representation of $\mathbb{H}_\wp$ has dimension $t_\wp$, 
for every $\wp$, then the representation field is defined.
\end{lemma}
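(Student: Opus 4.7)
The plan is to reduce the statement to \cite[Lem.2.1]{ab2}, applied to the sub-order $\phi(\Ha)\subseteq\alge$, since that earlier lemma handles a sub-order of $\alge$ directly, while here we start from a representation $\phi$ of an abstract $K$-algebra.

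The first step is to observe that the global and local spinor images defined here for the pair $(\phi,\Ha)$ agree with those of the sub-order $\phi(\Ha)\subseteq\alge$. This is immediate from the defining formulas, since both $H(\phi,\Da|\Ha)$ and $H_\wp(\phi,\Da|\Ha)$ depend only on the subring $\phi(\Ha_\wp)\subseteq\Da_\wp$ and the conjugation action of $\alge_\wp^*$. Consequently the group $\Gamma(\phi,\Ha)$, and hence the lower representation field $F_-(\phi,\Ha)$, coincide with the analogous invariants attached to $\phi(\Ha)$ viewed as a sub-order of $\alge$.

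The second step is to identify the local invariants. By construction, $\mathbb{H}_\wp$ is the image of $\Ha_\wp$ in $\mathbb{D}_\wp=\Da_\wp/J_\wp$ via $\phi$, which is also the image of the subring $\phi(\Ha_\wp)\subseteq\Da_\wp$. The center $\mathbb{E}_\wp$ of $\mathbb{D}_\wp$ and the integer $t_\wp$ defined here therefore coincide with the quantities that \cite[Lem.2.1]{ab2} attaches to $\phi(\Ha)\subseteq\alge$. Applying that lemma directly then yields both the description of $F_-(\phi,\Ha)$ as the largest subfield of $\Sigma$ whose inertia degree at every $\wp$ divides $t_\wp$, and the sufficient condition for the representation field to be defined.

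The genuine work, which is carried out in the reference rather than here, is the computation of $H_\wp(\phi,\Da|\Ha)$ in representation-theoretic terms; this is the step I expect to be the main obstacle if one tries to reprove the lemma from scratch. One shows that conjugating $\phi(\Ha_\wp)$ into $\Da_\wp$ by an element $a\in\alge_\wp^*$ corresponds, after reduction modulo $J_\wp$, to a representation of $\mathbb{H}_\wp$ in $\mathbb{D}_\wp$, which decomposes into irreducible $\mathbb{E}_\wp$-summands of dimensions $d_1,\dots,d_k$ as in the block form \eqref{blocks}. Tracking the valuations of the corresponding reduced norms $N(a)$ identifies the subgroup of $K_\wp^*$ generated by $H_\wp(\phi,\Da|\Ha)$ with the one of inertia degree $\gcd(d_i)=t_\wp$, and class field theory then converts this into the stated condition on $F$. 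When every $d_i$ equals $t_\wp$ the local spinor image is itself a group, which is exactly what is needed for the representation field to exist.
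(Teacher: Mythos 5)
Your proposal is correct and follows essentially the same route as the paper, which proves the lemma precisely by the observation that all the invariants depend only on the sub-order $\phi(\Ha)\subseteq\alge$, so the statement follows by replacing $\Ha$ with $\phi(\Ha)$ in \cite[Lem.2.1]{ab2}. Your extra verification that $\mathbb{H}_\wp$, $\mathbb{E}_\wp$ and $t_\wp$ agree with the invariants of the sub-order $\phi(\Ha)$ is exactly the implicit content of that substitution.
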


The algebras  $\mathbb{H}_\wp$  in the previous lemma seem to depend heavily on the maximal order $\Da$, but this is not so.

\begin{lemma}\label{l4b} Let $\Ha$ be an order and let
$\phi:\Ha\rightarrow\alge$ be a faithful representation satisfying
$\phi(\Ha)\subseteq\Da$ for some maximal order $\Da$  in the CSA
$\alge$.  For every place $\wp\in U$, with maximal order $m_\wp\subseteq\oink_\wp$ and residue field $\mathbb{K}_\wp$,
the irreducible representations of the algebra $\mathbb{H}_\wp$ are exactly
the irreducible representations of the algebra
 $\widetilde{\mathbb{H}}_\wp=\Ha_\wp/m_\wp\Ha_\wp$.
\end{lemma}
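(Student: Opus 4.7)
The plan is to exhibit a natural surjection $\widetilde{\mathbb{H}}_\wp\twoheadrightarrow\mathbb{H}_\wp$ of finite-dimensional $\mathbb{K}_\wp$-algebras and to show that its kernel is nilpotent; then both algebras share the same semisimple quotient, and hence the same simple modules, because in any Artinian algebra every simple module is annihilated by the Jacobson radical.

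Using the faithfulness of $\phi$ to identify $\Ha_\wp$ with $\phi(\Ha_\wp)\subseteq\Da_\wp$, I note that $J_\wp$ contains $m_\wp\Da_\wp$ and therefore contains $m_\wp\Ha_\wp$, so the composition $\Ha_\wp\hookrightarrow\Da_\wp\twoheadrightarrow\mathbb{D}_\wp$ factors through $\widetilde{\mathbb{H}}_\wp$ and yields the desired surjection onto $\mathbb{H}_\wp$ with kernel $\mathfrak{I}=(\Ha_\wp\cap J_\wp)/m_\wp\Ha_\wp$. To prove $\mathfrak{I}$ nilpotent, I observe that $\Da_\wp/m_\wp\Da_\wp$ is a finite-dimensional $\mathbb{K}_\wp$-algebra whose unique maximal two-sided ideal is $J_\wp/m_\wp\Da_\wp$, which is therefore its (nilpotent) Jacobson radical; hence $J_\wp^e\subseteq m_\wp\Da_\wp$ for some $e\geq 1$, and iterating gives $J_\wp^{ke}\subseteq m_\wp^k\Da_\wp$. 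It therefore suffices to show $\Ha_\wp\cap m_\wp^k\Da_\wp\subseteq m_\wp\Ha_\wp$ for $k$ sufficiently large. Setting $L_\wp=K_\wp\Ha_\wp$, the set $L_\wp\cap\Da_\wp$ is a subring of $\Da_\wp$ containing $\Ha_\wp$, finitely generated as an $\oink_\wp$-module, and spanning $L_\wp$ over $K_\wp$; hence it is an $\oink_\wp$-lattice in $L_\wp$ commensurable with $\Ha_\wp$, so there exists $N\geq 0$ with $m_\wp^N(L_\wp\cap\Da_\wp)\subseteq\Ha_\wp$. Any $h\in\Ha_\wp\cap m_\wp^{N+1}\Da_\wp$ may be written $h=\pi^{N+1}u$ for $\pi$ a uniformizer of $K_\wp$ and some $u\in\Da_\wp$; since $u=\pi^{-(N+1)}h$ lies in $L_\wp\cap\Da_\wp$, we get $\pi^Nu\in\Ha_\wp$ and therefore $h=\pi\cdot\pi^Nu\in m_\wp\Ha_\wp$, as required.

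With $\mathfrak{I}$ nilpotent, it lies inside the Jacobson radical of the Artinian algebra $\widetilde{\mathbb{H}}_\wp$, so every simple $\widetilde{\mathbb{H}}_\wp$-module is annihilated by $\mathfrak{I}$ and descends uniquely to a simple $\mathbb{H}_\wp$-module; conversely, every simple $\mathbb{H}_\wp$-module lifts along the surjection to a simple $\widetilde{\mathbb{H}}_\wp$-module. The main obstacle is the commensurability step in the second paragraph: it relies crucially on the faithfulness of $\phi$ to realize $L_\wp$ as a subalgebra of $\alge_\wp$ inside which both $\Ha_\wp$ and $L_\wp\cap\Da_\wp$ are full $\oink_\wp$-lattices, a device that fails in general if $\phi$ has a kernel.
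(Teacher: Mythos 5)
Your proof is correct, and it reaches the conclusion by a genuinely different route than the paper. Both arguments start the same way: faithfulness identifies $\Ha_\wp$ with $\phi(\Ha_\wp)$, the map $\Ha_\wp\to\mathbb{D}_\wp$ factors through $\widetilde{\mathbb{H}}_\wp$, and everything reduces to showing that the kernel of $\widetilde{\mathbb{H}}_\wp\twoheadrightarrow\mathbb{H}_\wp$ lies in the Jacobson radical. The paper handles this by lifting an arbitrary idempotent $T$ of $\widetilde{\mathbb{H}}_\wp$ to an idempotent $t\in\Ha_\wp$ (using completeness of $\Ha_\wp$) and observing that $\pi_\wp^{-1}\phi(t)$ is not integral, so a nonzero idempotent cannot land in $J_\wp$; hence the kernel contains no nonzero idempotent and, being an ideal of an Artinian algebra, is nilpotent. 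You instead prove nilpotence of the kernel directly: $J_\wp/m_\wp\Da_\wp$ is the radical of $\Da_\wp/m_\wp\Da_\wp$, so $J_\wp^{ke}\subseteq m_\wp^k\Da_\wp$, and your commensurability estimate $m_\wp^N(L_\wp\cap\Da_\wp)\subseteq\Ha_\wp$ gives $\Ha_\wp\cap m_\wp^{N+1}\Da_\wp\subseteq m_\wp\Ha_\wp$, whence $\mathfrak{I}^{(N+1)e}=0$. Your version trades the idempotent-lifting machinery for an explicit valuation/lattice argument: it is more self-contained (no appeal to completeness of $\Ha_\wp$ or to the fact that a non-nilpotent ideal of an Artinian ring contains a nonzero idempotent) and even yields an explicit nilpotency exponent, at the cost of being longer than the paper's two-line argument. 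You also correctly isolate where faithfulness enters, namely in identifying $\widetilde{\mathbb{H}}_\wp$ with $\phi(\Ha_\wp)/m_\wp\phi(\Ha_\wp)$ inside $\Da_\wp$, which matches the role it plays in the paper's proof.
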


\begin{proof}
It is immediate that  $\mathbb{H}_\wp$ is a homomorphic image of  $\widetilde{\mathbb{H}}_\wp$. It suffices therefore to note
that any idempotent $T$ in $\widetilde{\mathbb{H}}_\wp$ can be lifted to an idempotent $t$ of $\Ha_\wp$ and $ \pi_\wp^{-1}\phi(t)$ is not integral over $\oink_\wp$,
for any local uniformizing parameter $ \pi_\wp$,
whence the image of $T$  in   $\mathbb{H}_\wp$ is never $0$. \end{proof}

\begin{example}
When $\alge$ is  a quaternion algebra, The residual algebra   $\mathbb{D}_\wp$ is either a quadratic extension
$\mathbb{L}_\wp$, of the residue field  $\mathbb{K}_\wp$, or
a matrix algebra $\matrici_2(\mathbb{K}_\wp)$.
Then, either $\mathbb{E}_\wp\mathbb{H}_\wp$ has a unique two-dimensional  irreducible representation or just one-dimensional representations. In either case the last
 condition in the lemma is satisfied, so the representation field exists for all representations in quaternion algebras. 
\end{example}

For simplicity, one can define an upper representation field $F^-(\phi,\Ha)$ which is the class field of 
$\Delta(\phi,\Da|\Ha)$, the group of ideles $\delta$ satisfying
$$\delta K^*H(\phi,\Da|\Ha)=K^*H(\phi,\Da|\Ha).$$ Note that the representation field
is defined if and only if $F^-(\phi,\Ha)=F_-(\phi,\Ha)$ \cite{spinor}.

\begin{remark}\label{rmk1}
This section can be extended, word-by-word, to order over an $A$-curve  $X$, as defined in
\cite{abelianos},with structure sheaf $\oink=\oink_X$ and field of
rational functions $K=K(X)$.  In other words, all results here apply to $X$-orders for a projective curve $X$ over
a finite field, as defined in \cite{brzezinski87}.  This latter setting is called the projective case in all that follows.
Let $|X|$ be the set of closed points of $X$. Note that $|X|$ can be identified with the set $\Pi(K)$ of places of $K$
defined above. We set $U=|X|$ and $S=\emptyset$ in the projective case, i.e.,there are no infinite places. In this case we define the maximal order $I^\wp$ corresponding to a place $\wp$
 as the one-dimensional lattice on $K$
satisfying $$I^\wp(U)=\left\{\begin{array}{lcr}\left\{f\in \oink(U)\big| |f|_\wp<1\right\}&\textnormal{ if }&\wp\in U\\
\oink(U)&\textnormal{ if }&\wp\notin U\end{array}\right\}.$$ 
An $S$-order $\Ha$ on a finite dimensional $K$-algebra $L$ is an $X$-order, i.e., a locally free
sheaf of $\oink$-algebras, whose generic fiber is $L$, i.e., $L=K\otimes_\oink\Ha$  \cite{brzezinski87}. 
Certainly, strong approximation with respect to $S=\emptyset$ never holds, so spinor genera do not coincide with
conjugacy classes in this context, and in fact the number of conjugacy classes of maximal orders can be infinite,
but two orders $\Da$ and $\Da'$ in the same spinor genus have conjugate rings of sections  $\Da(V)$ and $\Da'(V)$
for all open set $V$ outside a finite set. The image of a maximal order $\Ha$ on $L$ under a representation
$\phi:L\rightarrow\alge$ is the sheaf defined by $\phi(\Ha)(V)=\phi\big(\phi(V)\big)$.
\end{remark}

\section{Proof of Theorem \ref{uno} and Theorem \ref{t2}.}

In all of this section we let $E$ be a finite dimensional algebra
over a local field $k$. We let $\mathfrak{B}$ be a CSA over $k$, 
and we fix a representation $\phi:E\rightarrow \mathfrak{B}$.
 Let $\mathfrak{E}$ be a (local) order in $E$. Recall that $\mathfrak{B}$ is isomorphic to a matrix 
 algebra $\matrici_f(B)$ over a division algebra $B$, so we can assume 
 $\mathfrak{B}=\matrici_f(B)$. In particular, the space of column
vectors $B^f$ is a $(E,B)$-bimodule (\S2). 
Consider a composition series $\{0\}=M_0\subseteq\cdots\cdots\subseteq
M_r=B^f$ of $(E,B)$-bimodules.

\begin{lemma}\label{l3} Let $\mathfrak{E}$ be an order in the $k$-algebra $E$, where $k$ is a local field.
Let $\phi:E\rightarrow\matrici_f(B)$ be a representation, and let 
$\{0\}=M_0\subseteq\cdots\cdots\subseteq M_r=B^f$  be a composition series of the corresponding bimodule.
 Let $C_i$ be
the algebra of right-$B$-linear maps in $M_i/M_{i-1}$, and let 
$\phi_i:L\rightarrow C_i$ denote the natural representation.
Then  there exist
a family of maximal orders $\{\Da_i\}$, with
$\phi_i(\mathfrak{E})\subseteq\Da_i\subseteq C_i$, and a maximal order
$\Da\subseteq\mathfrak{B}$, containing $\phi(\mathfrak{E})$, such that
$$H(\phi,\Da|\mathfrak{E})\supseteq\prod_{i=1}^rH(\phi_i,\Da_i|\mathfrak{E}).$$
\end{lemma}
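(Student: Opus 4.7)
The plan is to build $\Da$ as the endomorphism ring of an $O_B$-lattice aligned with a splitting of the filtration, and then realize each element of $\prod_iH(\phi_i,\Da_i|\mathfrak{E})$ as the reduced norm of a block-diagonal matrix in $\mathfrak{B}^*$, adjusted by central scalar factors chosen so that the off-diagonal entries remain in $\Da$.

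First I would fix a $B$-linear splitting $B^f=V_1\oplus\cdots\oplus V_r$ compatible with the filtration $M_i=V_1\oplus\cdots\oplus V_i$; in this splitting each $\phi(l)$ acquires the block upper-triangular form~(\ref{blocks}) with $(i,i)$-block $\phi_i(l)\in C_i$ and $(i,j)$-blocks $a_{ij}(l)\in\mathrm{Hom}_B(V_j,V_i)$ for $i<j$. In each $V_i$ I pick a $\phi_i(\mathfrak{E})$-stable $O_B$-lattice $\Lambda_i$ and set $\Da_i:=\mathrm{End}_{O_B}(\Lambda_i)$, which is a maximal order of $C_i$ containing $\phi_i(\mathfrak{E})$. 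Finally I choose integers $N_1>N_2>\cdots>N_r=0$ whose gaps are so large that $\pi^{N_i-N_j}a_{ij}(l)\Lambda_j\subseteq\Lambda_i$ for every $i<j$ and every $l$ in a finite generating set of $\mathfrak{E}$ (by $k$-linearity of the $a_{ij}$, this yields the same inclusion for all $l\in\mathfrak{E}$). With this choice the lattice $\Lambda:=\bigoplus_i\pi^{-N_i}\Lambda_i$ is $\phi(\mathfrak{E})$-stable, and $\Da:=\mathrm{End}_{O_B}(\Lambda)$ is a maximal order of $\mathfrak{B}$ containing $\phi(\mathfrak{E})$, whose $(i,j)$-block equals $\pi^{N_j-N_i}\mathrm{Hom}_{O_B}(\Lambda_j,\Lambda_i)$.

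Given $a_i\in C_i^*$ with $a_i\phi_i(\mathfrak{E})a_i^{-1}\subseteq\Da_i$, I test the block-diagonal element $\tilde a:=\mathrm{diag}(\pi^{m_1}a_1,\dots,\pi^{m_r}a_r)\in\mathfrak{B}^*$ for integer exponents $m_i$ to be selected. Writing $d_i=\dim_BV_i$ and $r_B^2=\dim_kB$, the block-diagonal formula for reduced norms in $\matrici_f(B)$ gives $N_{\mathfrak{B}}(\tilde a)=\pi^{r_B\sum_id_im_i}\prod_iN(a_i)$, so the single linear condition $\sum_id_im_i=0$ enforces $N(\tilde a)=\prod_iN(a_i)$. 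The $(i,i)$-block of $\tilde a\phi(l)\tilde a^{-1}$ is $a_i\phi_i(l)a_i^{-1}\in\Da_i$, and for $i<j$ the $(i,j)$-block is $\pi^{m_i-m_j}a_ia_{ij}(l)a_j^{-1}$, which lands in the $(i,j)$-block of $\Da$ as soon as $m_i-m_j$ exceeds a finite bound $C_{ij}$ depending only on $a_i,a_j$, the chosen $N_\bullet$, and a finite generating set of $\mathfrak{E}$.

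The only genuine obstacle is to meet the equation $\sum_id_im_i=0$ simultaneously with the finitely many lower bounds $m_i-m_{i+1}\geq C_i$. This is elementary: for any rational vector $(x_1,\dots,x_r)$ satisfying $x_1>x_2>\cdots>x_r$ and $\sum_id_ix_i=0$ (such vectors exist because the hyperplane $\sum d_ix_i=0$ meets the open cone $x_1>\cdots>x_r$), the scaled vector $m_i=Tx_i$ is integral for any $T$ that is a multiple of the common denominator and its gaps $T(x_i-x_{i+1})$ grow linearly with $T$. Taking $T$ large enough for the given $a_\bullet$ produces a $\tilde a$ with $\tilde a\phi(\mathfrak{E})\tilde a^{-1}\subseteq\Da$ and $N(\tilde a)=\prod_iN(a_i)$, which is precisely the desired inclusion.
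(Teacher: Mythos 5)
Your proof is correct and follows essentially the same route as the paper: conjugate by a block-diagonal element $\mathrm{diag}(\pi^{m_1}a_1,\dots,\pi^{m_r}a_r)$ adapted to the block decomposition (\ref{blocks}), with gaps $m_i-m_j$ chosen large enough to push the off-diagonal blocks into $\Da$. The only (harmless) deviations are that you make the choice of $\Da$ and the $\Da_i$ explicit as endomorphism rings of lattices, and you cancel the $\pi$-power contribution to the reduced norm exactly via $\sum_i d_i m_i=0$, whereas the paper adjusts the exponents by multiples of $f$ and uses that the spinor image is stable under multiplication by $n$-th powers of central scalars.
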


\begin{proof}
By the remarks at the begining of \S2, there exists a basis of $B^m$ for which the algebra $L=\phi(E)$ is
contained in the ring of matrices with a block decomposition of
the form (\ref{blocks}).
It suffices to see that whenever $u_i$ is a generator for
$\Da_i|\mathfrak{E}$, then
\begin{equation}\label{blocks2}\left(\begin{array}{cccc}\pi^{t_1}u_1&0&\cdots&0\\0&\pi^{t_2}u_2&\cdots&0\\
\vdots&\vdots&\ddots&\vdots\\0&0&\vdots&\pi^{t_r}u_r\end{array}\right),\end{equation}
is a generator for $\Da|\mathfrak{E}$
as soon as each difference $t_i-t_{i-1}$ is chosen big enough, so that the block $\pi^{t_i-t_j}a_{ij}(h)$ is integral for every $i<j$
and every $h\in \mathfrak{E}$. Note that we can achieve this by replacing  $t_i$ by $t_i+fiN$ for large $N$, which does not modify the class of $t_i$ in $\enteri/f\enteri$. The rest of the proof is a word by word trasliteration of the proof of Lemma 4.2 in \cite{abelianos},
and it is therefore omitted. \end{proof}

\begin{proposition}
Let $K$ be a global field, $L$ a $K$-algebra, $\alge$ a $K$-CSA, and $\phi:L\rightarrow\alge$ a representation.
Assume that, for each irreducible component $\phi_i$, $i=1,\dots,r$, of the representation $\phi:L\rightarrow\alge$, the
representation field $F(\Ha,\phi_i)$ is defined. Then the representation field $F(\Ha,\phi)$ is defined, and in fact
$F(\Ha,\phi)=\bigcap_i F(\Ha,\phi_i)$.
\end{proposition}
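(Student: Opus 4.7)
The plan is to prove the equality $F(\Ha,\phi)=\bigcap_i F(\Ha,\phi_i)$ (together with the existence of $F(\Ha,\phi)$) by combining the decomposition in Lemma \ref{l3} with the local characterization of Lemma \ref{l4}.

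For the easier inclusion $F_-(\phi,\Ha)\subseteq \bigcap_i F(\Ha,\phi_i)$, I would apply Lemma \ref{l3} at each place $\wp$ to produce a maximal order $\Da\subseteq\alge$ containing $\phi(\Ha)$ and, for each $i$, a maximal order $\Da_i\subseteq C_i$ containing $\phi_i(\Ha)$, satisfying
$$H(\phi,\Da|\Ha)\supseteq\prod_{i=1}^r H(\phi_i,\Da_i|\Ha).$$
Since each $F(\Ha,\phi_i)$ is defined, each $H(\phi_i,\Da_i|\Ha)K^*$ is a subgroup of $J_K$ with class field $F(\Ha,\phi_i)$; in particular the product $\prod_i H(\phi_i,\Da_i|\Ha)K^*$ is itself a subgroup, whose class field is the intersection $\bigcap_i F(\Ha,\phi_i)$. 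Passing to groups generated yields $\Gamma(\phi,\Ha)K^*\supseteq\prod_i H(\phi_i,\Da_i|\Ha)K^*$, and class field theory delivers the desired inclusion on fields.

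To establish the reverse inclusion and the existence of $F(\Ha,\phi)$ simultaneously, I would invoke the sufficient criterion at the end of Lemma \ref{l4}: it suffices to check that, at every place $\wp$, all irreducible $\mathbb{E}_\wp$-representations of $\mathbb{H}_\wp$ share a single dimension. The block upper-triangular description underlying Lemma \ref{l3} exhibits the semisimple quotient of $\mathbb{H}_\wp$ as the direct sum $\bigoplus_i\mathbb{H}_\wp^{(\phi_i)}$ (the strictly upper-triangular entries are nilpotent and lie in the radical), so every irreducible representation of $\mathbb{H}_\wp$ factors through exactly one summand. Since each $F(\Ha,\phi_i)$ is defined, Lemma \ref{l4} provides a common dimension $t_\wp^{(i)}$ for all irreducible $\mathbb{E}_\wp^{(i)}$-representations of $\mathbb{H}_\wp^{(\phi_i)}$. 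Combining these with the appropriate degrees $[\mathbb{E}_\wp^{(i)}:\mathbb{E}_\wp]$, I would verify that the $\mathbb{E}_\wp$-dimensions of all irreducible representations of $\mathbb{H}_\wp$ coincide with the common value $t_\wp=\gcd_i t_\wp^{(i)}$ forced by the inclusion in the previous paragraph together with Lemma \ref{l4}'s description of $F_-(\phi,\Ha)$.

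The main obstacle I anticipate is precisely the bookkeeping between the residue centers $\mathbb{E}_\wp$ (attached to $\alge$) and the $\mathbb{E}_\wp^{(i)}$ (attached to the simple components $C_i$), which need not coincide: the dimensions $t_\wp^{(i)}$ supplied by the hypothesis are $\mathbb{E}_\wp^{(i)}$-dimensions, whereas the invariant that governs $F_-(\phi,\Ha)$ via Lemma \ref{l4} is an $\mathbb{E}_\wp$-dimension. Controlling how an irreducible $\mathbb{E}_\wp^{(i)}$-representation decomposes upon restriction to $\mathbb{E}_\wp$, and reconciling the resulting multiplicities with the block structure provided by Lemma \ref{l3}, is the delicate technical step. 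Once it is carried out, the sufficient condition of Lemma \ref{l4} simultaneously gives the existence of $F(\Ha,\phi)$ and, combined with the first paragraph, its identification with $\bigcap_i F(\Ha,\phi_i)$.
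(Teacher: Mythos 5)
Your first paragraph is fine and matches the paper's argument: with the orders $\Da$, $\Da_i$ of Lemma \ref{l3} one gets $H(\phi,\Da|\Ha)\supseteq\prod_i H(\phi_i,\Da_i|\Ha)$, and since each $F(\Ha,\phi_i)$ is defined the right-hand side is, modulo $K^*$, a group whose class field is $\bigcap_i F(\Ha,\phi_i)$.

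The second half, however, rests on a reversed implication and on a claim that is false in general. Lemma \ref{l4} says that \emph{if} every irreducible $\mathbb{E}_\wp$-representation of $\mathbb{H}_\wp$ has dimension $t_\wp$ then the representation field is defined; it does not say that definedness of $F(\Ha,\phi_i)$ gives you a common dimension $t_\wp^{(i)}$ for the residual algebra of $\phi_i(\Ha)$, so you cannot extract those numbers from the hypothesis. Worse, the equidimensionality criterion simply fails in the situation covered by the Proposition: take $L=K\times B$ with $B$ a quaternion algebra and $\Ha$ a product of maximal orders, embedded in $\matrici_3(K)$. At a place where $B$ ramifies the residual algebra $\mathbb{H}_\wp$ has irreducible representations of dimensions $1$ and $2$, so $t_\wp=1$ but not all irreducibles have dimension $t_\wp$; yet both component fields are defined and the Proposition (and the Corollary following it) asserts $F(\Ha,\phi)$ is defined. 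So no amount of bookkeeping between $\mathbb{E}_\wp$ and $\mathbb{E}_\wp^{(i)}$ will let you verify that criterion for the glued order. The paper avoids this entirely: it proves the \emph{equality} $H(\phi,\Da|\Ha)=\prod_{i=1}^r H(\phi_i,\Da_i|\Ha)$. The containment $\supseteq$ is Lemma \ref{l3}; for $\subseteq$ one uses the description of the local spinor image behind Lemma \ref{l4} together with Lemma \ref{l4b} (its proof shows the irreducible representations of $\mathbb{H}_\wp$ are exactly those of the residual algebras of the $\phi_i(\Ha)$), so every valuation realized by $H_\wp(\phi,\Da|\Ha)$ is already realized by the product of the component images. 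Since that product is a group modulo $K^*$, the set $H(\phi,\Da|\Ha)K^*$ is a group, which is precisely what ``the representation field is defined'' means, and its class field is $\bigcap_i F(\Ha,\phi_i)$. You should replace your appeal to the sufficient criterion of Lemma \ref{l4} by this equality of spinor images.
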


\begin{proof}
Choose the orders $\Da$ and $\Da_i$, for $i=1,\dots,r$ as in the preceding Lemma. 
It suffices to prove that $$H(\phi,\Da|\Ha)=\prod_{i=1}^rH(\phi_i,\Da_i|\Ha).$$ One contention follows from the preceding lemma. The other contention follows
from Lemma \ref{l4} if we note that the irreducible representations of the residual algebra $\mathbb{H}_\wp$ of $\phi(\Ha)$ correspond to the irreducible representations of the residual 
algebras of $\phi_i(\Ha)$ for $i=1,\dots,r$ by the proof of Lemma \ref{l4b}. The result follows. 
 \end{proof}
Theorem \ref{uno} and  Theorem \ref{t2} follow from  next corollary:
\begin{cor}
Let $K$ be a global field, let $\alge$ be a $K$-CSA, and let $S$ be a  non-empty finite set
of places in $K$ containing the archimedean places if any.
Let $\Ha\subseteq\alge$ be an $S$-order.
If every irreducible component $\psi:L\rightarrow\alge'$ of the representation $\phi:L\rightarrow\alge$,
where $L=K\Ha$, satisfies any of the following conditions:
\begin{enumerate}
\item $\psi(L)$ is contained in a quaternion algebra,
\item $\psi(L)$ is commutative,
\end{enumerate}then the spinor class field is defined.\end{cor}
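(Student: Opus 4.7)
The plan is to invoke the preceding Proposition to reduce the problem to a single irreducible component at a time, and then appeal to the two base cases already available in the paper. The Proposition yields $F(\Ha,\phi)=\bigcap_i F(\Ha,\psi_i)$ as soon as each $F(\Ha,\psi_i)$ is defined, so the Corollary reduces to verifying that the representation field exists for an irreducible representation $\psi:L\to\alge'$ whose image is either contained in a quaternion algebra or is commutative.

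Case (1) is settled by the Example following Lemma \ref{l4b}: for a quaternion target the residual algebra $\mathbb{D}_\wp$ is either a quadratic extension of $\mathbb{K}_\wp$ or the full matrix ring $\matrici_2(\mathbb{K}_\wp)$, and in both situations the irreducible $\mathbb{E}_\wp$-representations of $\mathbb{H}_\wp$ share a common dimension; hence the sufficient criterion of Lemma \ref{l4} applies and $F(\Ha,\psi)$ is defined.

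Case (2) is handled by the main theorem of \cite{abelianos}: if $\psi(L)$ is commutative then $\psi(\Ha)$ is a commutative $S$-order, for which the representation field has been shown to exist (as recalled in the introduction, generalizing Chevalley, Chinburg--Friedman, and Linowitz--Shemanske). Combining the two cases through the Proposition closes the Corollary.

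The main obstacle I foresee is bookkeeping: one must check that the ``irreducible component'' produced by the composition series in Lemma \ref{l3} is genuinely the $\psi$ to which condition (1) or (2) is applied, and that in case (1) one may replace the ambient $\alge'$ by a quaternion algebra containing $\psi(L)$ without altering the residual data feeding into Lemma \ref{l4}. Both points follow from the block decomposition (\ref{blocks}) and the opening bullet points of \S2, but they deserve an explicit sentence in the full write-up.
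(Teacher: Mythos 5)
Your proposal is correct and follows essentially the same route as the paper: reduce to irreducible components via the preceding Proposition, settle the quaternion case by Lemma \ref{l4b} together with the Example after it, and settle the commutative case by appealing to \cite{abelianos} (the paper cites the proof of Proposition 4.3 there, i.e.\ verifies the last condition of Lemma \ref{l4}, rather than quoting the theorem itself, but this is an immaterial difference). Your closing caveat about transferring the residual data when the ambient algebra is replaced by the quaternion algebra is exactly the role Lemma \ref{l4b} plays in the paper's argument.
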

\begin{proof}
It suffices to prove that either hypotheses implies the last condition in Lemma \ref{l4}. This follows as in the proof of
\cite[Proposition 4.3]{abelianos} when  $\psi(L)$ is commutative. When $\psi(L)$ is contained in a quaternion algebra,
this follows from Lemma \ref{l4b}, an the example following it.
 \end{proof}
\section{Proof of Theorem \ref{t3}.}

Let $k=K_\wp$ be a local field, let $E/k$ be the unique unramified quadratic extension, and let 
$L=\matrici_n(E)$. Note that there exists, up to change of basis, a unique faithful representation
$\phi:L\rightarrow\matrici_{2n}(k)$ and it can be realized by identifying $E^n$ with $k^{2n}$. Note that
the basis can be chosen in a way that $\oink_E^n$ is identified with $\oink_k^{2n}$. In this case we say that
the representation is integral. Next result is now immediate:
\begin{lemma}\label{ea1}
Let $E/k$ be an unramified quadratic extension of  local fields.
If $\phi:\matrici_n(E)\rightarrow\matrici_{2n}(k)$ is a faithful integral representation, 
for any vector $v$ in $\oink_k^{2n}\backslash\pi\oink_k^{2n}$ we have $\phi\Big(\matrici_n(\oink_E)\Big)v=\oink_k^{2n}$.
\end{lemma}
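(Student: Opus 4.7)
The plan is to reduce the assertion to a statement about $\oink_E$-modules, use the unramifiedness of $E/k$ to match $\pi\oink_k^{2n}$ with $\pi\oink_E^n$, and then invoke the fact that any unimodular vector in $\oink_E^n$ is carried to any standard basis vector by a matrix in $\matrici_n(\oink_E)$.

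First I would transport the data across the identification $E^n\cong k^{2n}$ that makes $\phi$ integral. Under this isomorphism the vector $v\in\oink_k^{2n}\setminus\pi\oink_k^{2n}$ corresponds to a vector $w\in\oink_E^n$, and the action of $\phi\bigl(\matrici_n(\oink_E)\bigr)$ on $v$ matches the usual left action of $\matrici_n(\oink_E)$ on $w$. The target equality becomes $\matrici_n(\oink_E)\cdot w=\oink_E^n$.

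Next I would use that $E/k$ is unramified to write $\pi_E=\pi=\pi_k$ (up to a unit), so that a uniformizer of $k$ is also a uniformizer of $E$. This gives $\pi\oink_E^n=\pi\oink_k^{2n}$ under the identification, and therefore the hypothesis $v\notin\pi\oink_k^{2n}$ translates to $w\notin\pi\oink_E^n$. Consequently at least one coordinate $w_i$ of $w$ fails to lie in $\pi\oink_E$ and so is a unit in $\oink_E$.

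The remaining step is the standard observation that, for any $j$, the matrix $w_i^{-1}E_{ji}\in\matrici_n(\oink_E)$ (where $E_{ji}$ is the elementary matrix with a $1$ in position $(j,i)$) sends $w$ to the standard basis vector $e_j$. Hence every $e_j$, and so all of $\oink_E^n$, lies in $\matrici_n(\oink_E)\cdot w$. Transporting back yields $\phi\bigl(\matrici_n(\oink_E)\bigr)v=\oink_k^{2n}$.

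There is no real obstacle here; the only point requiring care is the unramified hypothesis, which is precisely what ensures that ``not divisible by $\pi_k$'' in $\oink_k^{2n}$ coincides with ``has a unit coordinate'' in $\oink_E^n$. Without it, a vector could lie outside $\pi\oink_k^{2n}$ while still having every coordinate in $\pi_E\oink_E$, and the argument would break.
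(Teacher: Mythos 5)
Your proof is correct, and it fleshes out exactly the argument the paper intends: the paper states this lemma as ``immediate'' from the integral identification $\oink_E^n\cong\oink_k^{2n}$, and your reduction to a unimodular vector in $\oink_E^n$ (using unramifiedness so that $\pi_k$ remains a uniformizer of $E$) together with the elementary-matrix step is precisely the omitted verification. No discrepancy with the paper's approach.
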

 Next we consider the order
\begin{equation}\label{mord}
\Ha=\bbmatrix{\oink_k 1_E}{\oink_E}{0}{\oink_E}+\pi\bbmatrix{\oink_E}{\oink_E}{\oink_E}{\oink_E}
\subseteq\matrici_2(E),
\end{equation}
 where $\pi$ is a uniformizing parameter of $k$.
\begin{lemma}
Let $\Ha$ be as in (\ref{mord}), and let $\phi:L\rightarrow\matrici_4(k)$ be a faithful integral representation. Let $\Da=\matrici_4(\oink_k)$.
 Then the relative spinor image is
$$H_\wp(\phi,\Da|\Ha)=k^{*4}\oink_k^*\cup\Big( \pi^2 k^{*4}\oink_k^*\Big)\cup\Big( \pi^3 k^{*4}\oink_k^*\Big).$$
\end{lemma}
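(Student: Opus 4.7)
My plan is to translate the computation of $H_\wp(\phi,\Da|\Ha)$ into a classification of $\phi(\Ha)$-invariant $\oink_k$-lattices $\Lambda\subseteq E^2 \cong k^4$. Every element $a \in \mathrm{GL}_4(k)$ with $a\phi(\Ha)a^{-1}\subseteq \Da$ produces such a lattice as $\Lambda = a^{-1}\oink_k^4$, and conversely. Since $a$ is then determined up to left multiplication by $\mathrm{GL}_4(\oink_k)$, and scaling $\Lambda$ by $c \in k^*$ multiplies $N(a)=\det(a)$ by $c^{-4}$, the class of $N(a)$ in $k^*/(k^{*4}\oink_k^*)$ is captured by the valuation of the generalized index $[\oink_k^4 : \Lambda]$ taken modulo $4$. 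The task therefore reduces to enumerating invariant lattices up to $k^*$-scaling and recording the residues that appear.

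First I will decompose $\Lambda$. Because $\Ha$ contains the diagonal idempotents $e_{11}$ and $e_{22}$, any invariant lattice splits as $\Lambda = \Lambda_1 \oplus \Lambda_2$ along $E^2 = E \oplus E$. Reading off the entries of (\ref{mord}) I will then reformulate invariance as: $\Lambda_2$ is an $\oink_E$-lattice in $E$; $\Lambda_1$ is a module over the local subring $R = \oink_k + \pi\oink_E \subseteq \oink_E$; and the coupling conditions $\Lambda_2 \subseteq \Lambda_1$ and $\pi\oink_E\,\Lambda_1 \subseteq \Lambda_2$ hold.

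The main obstacle is the classification of $R$-lattices in $E$, since $R$ is a local but non-regular order in $\oink_E$ with conductor $\pi\oink_E$ and $\oink_E/R$ one-dimensional over the residue field. Using this, I will show that, up to $E^*$-scaling, every $R$-lattice in $E$ is either an $\oink_E$-lattice $\pi^a\oink_E$ or a proper $R$-sublattice $\pi^a cR$ for some $c \in \oink_E^*$, the proper lattices being parametrized by the cosets of $\oink_E^*/R^*$. Once this classification is in hand, the coupling conditions collapse the admissible pairs $(\Lambda_1,\Lambda_2)$, up to $k^*$-scaling, to three types: the maximal lattice $\oink_E^2$; the sublattice $\oink_E \oplus \pi\oink_E$; and the family $cR \oplus \pi\oink_E$, for which $\Lambda_2 = \pi\oink_E$ is forced by the tight inclusion $\pi\oink_E \subseteq cR \subsetneq \oink_E$.

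Finally, computing the index $[\oink_k^4 : \Lambda]$ on each representative, using $[\oink_E : \pi\oink_E]$ of valuation $2$ and $[\oink_E : cR]$ of valuation $1$, produces the three residue classes $0$, $2$, and $3$ modulo $4$; collecting the corresponding cosets of $k^{*4}\oink_k^*$ then recovers the claimed formula for $H_\wp$.
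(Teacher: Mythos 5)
Your proposal is correct and follows essentially the same route as the paper's own proof: both reduce $H_\wp(\phi,\Da|\Ha)$ to classifying $\phi(\Ha)$-invariant lattices, split any such lattice as $\Lambda_1\oplus\Lambda_2$ (you via the idempotents $e_{11},e_{22}$, the paper via its suborders and Lemma \ref{ea1}), and read off the valuation of the index modulo $4$, obtaining exactly the residues $0$, $2$, $3$. The only differences are cosmetic: where you classify the $R$-lattices for $R=\oink_k+\pi\oink_E$, the paper instead normalizes $\pi\oink_k^4\subseteq\Lambda\subseteq\oink_k^4$ so that every intermediate $\Lambda_1$ is automatically admissible; and your identification of the class of $N(a)$ with the index valuation (rather than with its negative, coming from $\Lambda=a^{-1}\oink_k^4$) rests on the same sign convention that the paper itself uses when passing from its distance $\rho_\wp$ to the stated cosets, so it introduces no discrepancy with the paper's argument.
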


\begin{proof}
 To simplify notations, we identify $\Ha$ with $\phi(\Ha)$.
 Note that $a\Ha a^{-1}\subseteq \Da$
if and only if  $\Ha \subseteq a^{-1}\Da a$. Note that $d\in\Da$ and $d'\in\Da'=a^{-1}\Da a$ are equivalent to
$$d\oink_k^4=\oink_k^4,
\qquad d' a^{-1}\oink_k^4=a^{-1}\oink_k^4.$$ 
It follows that $H_\wp(\phi,\Da|\Ha)$ is the set of norms of elements $a$ for which $a^{-1}\oink_k^4$
 is invariant under $\phi(\Ha)$. Let $M$  be a lattice that is invariant under  $\Ha$.
Multiplying by a power of $\pi$ if needed,
we can assume that $M$ is contained in $\oink_K^4$, but not  $\pi\oink_K^4$.
Note that $M$ is also invariant under every sub-order or ideal in $\Ha$:
\begin{enumerate}
\item Since $M$ is invariant under $\pi\matrici_2(\oink_E)$,  a direct application of Lemma \ref{ea1}
shows that  $M$ contains  $\pi\oink_k^4$.
\item The order $\Ha=\bbmatrix{\oink_kI_2}{0}{0}{\oink_E}$, where $I_2$ is the identity matrix,
 can only stabilize lattices of the form $\Lambda_1\times\Lambda_2$ with $\Lambda_2=\pi^r\oink_k^2$,
as follows from Lemma \ref{ea1} for $n=1$.
\item  The order $\Ha=\bbmatrix{\oink_kI_2}{\oink_E}{0}{\oink_kI_2}$ can only stabilize lattices of the form $\Lambda_1\times\Lambda_2$ with $\oink_E\Lambda_2\subseteq\Lambda_1$.
\end{enumerate}

We define the local distance between $\Da_1$ and $\Da_2=b\Da_1b^{-1}$ by
$$\rho_\wp(\Da_1,\Da_2)=v_\wp\big(N(b)\big)+4\enteri\in\enteri/4\enteri,$$
where $N:\matrici_4(k)^*\rightarrow k^*$ is the determinant.  Note that $\rho_\wp$ is well defined, since the 
conjugation stabilizer of $\Da_1$ is $k^*\Da_1^*$, and its set of norms is $k^{*4}\oink_k^*$. From what precedes, any
$\Ha$-invariant  lattice $M=\Lambda_1\times\Lambda_2$, with $\Lambda_1$ and  $\Lambda_2$ of rank $2$, 
and $\pi\oink_k^4\subsetneq M \subseteq\oink_k^4$, is of one of the following types:

\begin{enumerate}
\item If $\Lambda_2\neq\pi\oink_k^2$, then necessarily $\Lambda_2=\oink_k^2$, and in that case $\Lambda_1=\oink_k^2$,
so that $M=\oink_k^4$.
\item If $\Lambda_2=\pi\oink_k^2$, then $\Lambda_1$ is an arbitrary lattice satisfying  $\pi\oink_k^2\subseteq\Lambda_1\subseteq\oink_k^2$. There are three subcases:
\begin{enumerate}
\item If $\Lambda_1=\pi\oink_k^2$ we have $\rho_k(\Da,\Da')=\bar0$.
\item  If $\Lambda_1=\oink_k^2$ we have $\rho_k(\Da,\Da')=\bar2$.
\item  If $\Lambda_1=\oink_k v+\pi\oink_k^2$, for some $v\in\oink_k^2\backslash\pi\oink_k^2$,
 we have $\rho_k(\Da,\Da')=\bar3$.
\end{enumerate}
\end{enumerate}
The result follows.\end{proof}

\subparagraph{Proof of  Theorem \ref{t3}.}
Let $H$ be the Hilbert class field of $K$. Let $\Sigma$ be the spinor class field of maximal orders in $\matrici_4(K)$.
Then $\Sigma$ is the maximal sub-extension of $H$ of exponent $4$, and in particular, the Galois group
$\mathrm{Gal}(\Sigma/K)$ has an element $\sigma$ of order $4$. 
Let $\wp$ be a place satisfying $|[I_\wp,\Sigma/K]|=\sigma$, where $I_\wp$ is the maximal ideal corresponding to $\wp$, and 
$I\mapsto |[I,\Sigma/K]|$ denotes the artin map on ideals.
Let $F'$ be a degree-4 un-ramified cyclic extension such that $f_\wp(F'/K)=4$, and let $F/K$ be the unique quadratic sub-extension.
 We let $\Ha_\wp$ be defined as in equation (\ref{mord}), while we let $\Ha$ be maximal in $\matrici_2(F)$ at all other places. It is immediate from Lemma \ref{ea1} that any maximal order
of $\matrici_2(F)$ is contained in a unique maximal order of $\matrici_4(K)$ at inert places $\wp'\neq\wp$
for $F/K$, whence 
$$H_{\wp'}(\phi,\Da|\Ha)=K^{*4}\oink_K^*,$$ at those places. On the other hand, at places $\wp''\neq\wp$
 splitting $F/K$, every invariant lattice has the form $\pi_{\wp''}^t\oink_{\wp''}^2\times\pi_{\wp''}^s\oink_{\wp''}^2$,
whence $$H_{\wp''}(\phi,\Da|\Ha)=K^{*2}\oink_K^*.$$
 As $f_{\wp''}(F'/K)\leq2$ at the latter places, the Hilbert symbol of every element in 
$H_{\tilde\wp}(\phi,\Da|\Ha)$ is trivial on $F'$ for every place $\tilde\wp\neq \wp$. 
Note that the image in $\mathrm{Gal}(F'/K)$ of $H(\phi,\Da|\Ha)$ is the set
$$\{\mathrm{id},[\wp,F'/K]^2,[\wp,F'/K]^3\}.$$
We conclude that the upper representation field $F^-(\phi,\Ha)$
contains $F'$, while the lower representation field $F_-(\phi,\Ha)$ intersects $F'$ trivially. The result follows.
\qed

\begin{remark}
Assume we have a rank-8 order $ \Ha$, and a representation $\phi$.
 for which  the representation field is not defined. Then, there exists a ffinite place $\wp$ such that $H_\wp(\phi,\Da|\Ha)$
fails to be a group. By the proof of Theorem \ref{uno}, we can assume that the representation $\phi$ is irreducible (and faithfull). Furthermore, $\Ha$ cannot be commutative.
  We conclude that $L=K\Ha$ is a quaternion algebra over a quadratic extension $E/K$. 
\end{remark}

\section{Applications and examples}
We constructed  in \cite{eichler}  an order in $\matrici_n(K)$ , for $n\geq3$,
 whose representation field is not defined\footnote{The fact that the algebra is globally split is not essential here. We have only assumed it for simplicity.} (see also \cite[Ex.3.6]{abelianos}).
We choose $\Ha_\wp$ as the pre-image in $\matrici_n(\oink_K)$ of the residual algebra   $$\mathbb{H}_\wp=\bbmatrix {\mathbb{K}}{\mathbb{K}}0{\matrici_{n-1}(\mathbb{K})},\qquad \mathbb{K}=\oink_K/I_\wp,$$
where $I_\wp$ is the global prime ideal corresponding to $\wp$,  and we showed that the residual algebra $\mathbb{H}_\wp$ alone is not sufficient to determine wether the spinor image $H_\wp(\phi,\Da|\Ha)$ is a group.  In fact, here we can
prove a stronger statement:
\begin{proposition}
Let $k$ be a local field with ring integers $\oink_k$ and maximal ideal $m_k$. Let $E$ be a division $k$-CSA, and let $\Da=\matrici_f(\oink_E)
\subseteq \matrici_f(E)$. Let $\mathbb{E}=\oink_E/m_E$ where $m_E$ is the unique maximal bilateral ideal of $\oink_E$. 
For every residual algebra  $\mathbb{H}\subseteq\matrici_f(\mathbb{E}_\wp)$, there exists a local order $\Ha\subseteq\Da$ of
maximal rank whose image in $\matrici_f(\mathbb{E})$ is $\mathbb{H}$ and whose relative local spinor image
$H_k(\phi,\Da|\Ha)$ is a group.
\end{proposition}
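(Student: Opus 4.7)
The plan is to construct $\Ha$ explicitly as $\Ha=\tilde{\mathbb{H}}+\pi_k^N\Da$, where $\tilde{\mathbb{H}}\subseteq\Da$ is a subalgebra lift of $\mathbb{H}$ and $N$ is a positive integer to be chosen sufficiently large, and then to show that the relative spinor image becomes a group by analyzing the $\Ha$-invariant $\oink_E$-lattices in $E^f$.

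First, by the Wedderburn--Malcev theorem applied over the perfect residue field $\mathbb{E}$, one can lift $\mathbb{H}$ to a subalgebra $\tilde{\mathbb{H}}\subseteq\Da=\matrici_f(\oink_E)$ that maps isomorphically onto $\mathbb{H}$ under the reduction $\Da\twoheadrightarrow\matrici_f(\mathbb{E})$. Setting $\Ha=\tilde{\mathbb{H}}+\pi_k^N\Da$, one verifies: $\Ha$ is a $\oink_k$-order (since $\pi_k^N\Da$ is a two-sided ideal of $\Da$); it has maximal rank over $\oink_k$ (since $\pi_k^N\Da\subseteq\Ha$ already spans $\matrici_f(E)$ over $k$); and its image in $\matrici_f(\mathbb{E})$ is exactly $\mathbb{H}$ (since $\pi_k\in m_E$, so $\pi_k^N\Da$ reduces to zero).

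Next, an element $a\in\matrici_f(E)^*$ satisfies $a\Ha a^{-1}\subseteq\Da$ if and only if the lattice $L=a^{-1}\oink_E^f$ is $\tilde{\mathbb{H}}$-invariant (up to scalar) and contains $\pi_k^N\oink_E^f$ (again up to scalar). The $\tilde{\mathbb{H}}$-invariant lattices are controlled by the Jordan--H\"older filtration of $\mathbb{E}^f$ as an $\mathbb{H}$-module (as in the bimodule discussion at the beginning of Section 2), and the $\pi_k^N$-tolerance allows each composition factor to shift by an integer in an interval of length roughly $N$. Each such shift contributes a multiple of the dimension of the corresponding composition factor to the valuation $v_k(N(a))$, modulo the reduced degree $e=fn$ of $\matrici_f(E)$ over $k$. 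The resulting image in $k^*/k^{*e}\oink_k^*\cong\enteri/e\enteri$ is a sub-semigroup containing zero, generated by the dimensions of the composition factors of $\mathbb{H}$'s action on $\mathbb{E}^f$ --- the same dimensions that determine $F_-(\phi,\Ha)$ via Lemma \ref{l4}.

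The main obstacle is to show that, for $N$ sufficiently large, this sub-semigroup coincides with the subgroup it generates in $\enteri/e\enteri$, which by definition is what makes $H_k(\phi,\Da|\Ha)$ a group. The key point is that a sub-semigroup of $\enteri/e\enteri$ generated by $d_1,\ldots,d_r$ equals the subgroup they generate as soon as arbitrarily long sums of the $d_i$ are allowed, and the $\pi_k^N$-tolerance provides exactly this room. For the paradigmatic example $\mathbb{H}=\bbmatrix{\mathbb{K}}{\mathbb{K}}{0}{\matrici_{n-1}(\mathbb{K})}$ from \cite{eichler}, a direct calculation shows that the preimage ($N=1$) yields only two cosets in $\enteri/n\enteri$, which fails to be a subgroup whenever $n\geq3$, whereas for $N\geq n-1$ one sweeps out all of $\enteri/n\enteri$ (since $\gcd(n,n-1)=1$), so the spinor image becomes $k^*$. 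In general, for $N$ sufficiently large the attained subgroup agrees with the one corresponding to $F_-(\phi,\Ha)$, so the upper and lower representation fields coincide and $H_k(\phi,\Da|\Ha)$ is a group.
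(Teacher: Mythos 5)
Your overall construction has the right shape --- the paper also takes a lift of $\mathbb{H}$ adapted to a composition series and thickens it by $\pi^M\Da$ for $M$ large --- but two steps in your argument have genuine gaps. First, the ``Wedderburn--Malcev'' lift does not exist in general: a subring $\tilde{\mathbb{H}}\subseteq\matrici_f(\oink_E)$ mapping \emph{isomorphically} onto $\mathbb{H}$ is impossible when $k$ has characteristic zero, since every element of a ring isomorphic to $\mathbb{H}$ is killed by the residue characteristic $p$, while $\matrici_f(\oink_E)$ is torsion free; Wedderburn--Malcev splits off the radical of an algebra over a field, but it does not provide a multiplicative section of the reduction $\Da\rightarrow\matrici_f(\mathbb{E})$. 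What one can do (and what the paper does) is lift $\mathbb{H}$ merely to an $\oink_k$-order $\Ha'$ that is block upper-triangular with respect to a maximal flag of $(\mathbb{H},\mathbb{E})$-bimodules of $\mathbb{E}^f$, and then set $\Ha=\Ha'+\pi^M\Da$.

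Second, the heart of the matter --- that the set of attained norm classes is a sub-semigroup generated by the dimensions of the composition factors, becoming the full subgroup for large $N$ --- is asserted rather than proved, and it is exactly the delicate point. Spinor images are not semigroups in general: the rank-$8$ order of Section 4 attains $\{\bar0,\bar2,\bar3\}\subseteq\enteri/4\enteri$. The obstruction is that an $\Ha$-invariant lattice is not a direct sum of independently shifted composition factors; the off-diagonal (radical) part of $\Ha$ imposes compatibility conditions among the shifts, so one must exhibit explicit conjugating elements such as the block matrices in (\ref{blocks2}), with the successive differences $t_i-t_{i-1}$ large but with prescribed residues modulo the reduced degree, and one must also relate lattice indices to reduced-norm valuations when $E$ is a noncommutative division algebra. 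The paper sidesteps the direct lattice count: since each irreducible component of the triangular lift $\Ha'$ has residual algebra with a unique irreducible representation, $H_k(\phi,\Da|\Ha')$ is already a group by Lemma \ref{l4}, and finitely many generators for $\Da|\Ha'$ whose norms represent all classes remain generators for $\Da|\Ha$ with $\Ha=\Ha'+\pi^M\Da$ once $M$ is large, giving the squeeze $\Gamma_k(\phi,\Ha)=\Gamma_k(\phi,\Ha')=H_k(\phi,\Da|\Ha')\subseteq H_k(\phi,\Da|\Ha)\subseteq\Gamma_k(\phi,\Ha)$. To complete your version you would either have to carry out the explicit invariant-lattice construction with that bookkeeping, or compare $\Ha$ with a lift whose representation field is already known to be defined, as the paper does.
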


\begin{proof}
Let $M_1\subseteq M_2\subseteq\cdots\subseteq M_r$ be a maximal flag of $(\mathbb{H},\mathbb{E})$-bimodules in $\mathbb{E}^f$, and choose a basis such that $\mathbb{H}$ is contained in the ring of matrices of $\matrici_f(\mathbb{E})$ with a block decomposition of the form (\ref{blocks}). Lift $\mathbb{H}$ to a local order $\Ha'$ which also consists only on matrices
with a block decomposition of the form (\ref{blocks}) for a suitable basis, so in particular, whenever $u_i$ is a generator for
$\Da_i|\Ha_i$, then the matrix in (\ref{blocks2}) is a generator for $\Da|\Ha'$
as soon as each difference $t_i-t_{i-1}$ is chosen big enough. Note
also that, for every irreducible representation $\phi_i$ corresponding to this flag, the residual algebra
$\phi_i(\Ha)/m_k\phi_i(\Ha)$ has a unique representation,
hence the representation field $F(\Ha')$ is defined.
 Now we choose a finite number of generators for
$\Da|\Ha'$ whose reduced norms form a set of representatives for  $H_k(\phi,\Da|\Ha')/k^{*2}$, and note that they are also generators for $\Da|\Ha$,
 where $\Ha=\Ha'+\pi^M\Da$, for a uniformizing parameter $\pi$, if  $M$ is chosen big enough. It follows that, if $\Gamma_k(\phi,\Ha)$ is the group generated by $H_k(\phi,\Da|\Ha)$ and $\Gamma_k(\phi,\Ha')$ is defined analogously, then 
$$\Gamma_k(\phi,\Ha)=\Gamma_k(\phi,\Ha')=H_k(\phi,\Da|\Ha')\subseteq H_k(\phi,\Da|\Ha)\subseteq\Gamma_k(\phi,\Ha).$$
The result follows.\end{proof}

\begin{example}
 Let $\Ha$ and $\Ha'$ be global orders in $\matrici_n(K)$ of the form
$$\Ha=\left(\begin{array}{cccc}\Ha_1&M_{12}&\cdots&M_{1r}\\0&\Ha_2&\cdots&M_{2r}\\
\vdots&\vdots&\ddots&\vdots\\0&0&\cdots&\Ha_r\end{array}\right),\qquad
\Ha'=\left(\begin{array}{cccc}\Ha_1&0&\cdots&0\\0&\Ha_2&\cdots&0\\
\vdots&\vdots&\ddots&\vdots\\0&0&\cdots&\Ha_r\end{array}\right).$$
The results in this work show that whenever $\Ha'$ embeds into every maximal order, so
does $\Ha$. In fact, using strong approximation, it is easy to construct a sequence
of global conjugates of $\Ha$ whose adelization converges in the Hausdorff topology to
$\Ha'$, whence a similar result holds for any genus of orders of maximal rank. This fails to hold in the projective case
(see the remark at the end of \S\ref{rmk1} and the following example).
\end{example}

\begin{example}
Let $\Ha$ be the order
$$\Ha=\bbmatrix{\oink_X}{\mathcal{I}}0{\oink_X},$$ for an arbitrary
ideal (i.e., a one dimensional lattice in $K$). Then $\Ha$ is
contained in an order of every spinor genus of maximal orders in
$\matrici_2(K)$, as in the preceding example. Note that when $X$ is a projective curve over a
finite field $\finitum$, there exists conjugacy classes of maximal
$X$-orders that fail to contain a copy of $\Ha$. For example, if
$\mathcal{I}=\mathcal{L}_D$ is the sheave defined by
$$\mathcal{L}_D(U)=\{f\in K|\mathrm{div}(f)\geq-D\}$$ for a
divisor $D$ on $X$, the ring of global sections $\Ha(X)$ is the
set of all matrices of the form $\bbmatrix af0b$ where $a$ and $b$
are constants and $f\in\mathcal{L}_D(X)$. The dimension of
$\mathcal{L}_D(X)$ grows with the degree of $D$ acording to
Riemann-Roch's Theorem, whence by chosing a divisor $D$ of sufficiently large degree,
we can assume that $\Ha$ cannot be embedded in any order of an arbitrary prescribed finite family.
\end{example}


\begin{thebibliography}{xx}

\footnotesize


\bibitem{spinor}
{\sc L. Arenas-Carmona}, \textit{Applications of spinor class
fields: embeddings of orders and quaternionic lattices}, Ann.
Inst. Fourier \textbf{53} (2003), 2021--2038.

\bibitem{eichler}
 {\scshape L. Arenas-Carmona}. Relative spinor class
fields: A counterexample, \textit{Archiv. Math.} \textbf{91}
(2008), 486-491.

\bibitem{abelianos}
{\sc L. Arenas-Carmona}, \textit{Spinor class fields for
commutative orders}, \textit{Ann. Inst. Fourier} \textbf{62}, $\natural2$ (2012), 807-819.

\bibitem{ab2}
{\sc L. Arenas-Carmona}, \textit{Spinor class fields for
cyclic orders},  \textit{Acta Arith.} \textbf{156}  (2012), 143-156.

\bibitem{graphsandsheaves}
{\sc L. Arenas-Carmona}, \textit{Computing quaternion quotient graphs via representations of orders},  Preprint.
arXiv:1207.3470 [math.NT].

\bibitem{brzezinski87}
{\sc J. Brzezinski}, \textit{Riemann-Roch Theorem for locally
principal orders}, {\em Math. Ann.}, 276 (1987), 529-536.


\bibitem{Chevalley}
{\sc C. Chevalley}, \textit{L'arithm\'etique sur les alg\`ebres de
matrices}, Herman, Paris, 1936.


\bibitem{FriedmannQ}
{\sc T. Chinburg} and {\sc E. Friedman}, \textit{An embedding
theorem for quaternion algebras}, J. London Math. Soc.
\textbf{60.2} (1999), 33-44.




\bibitem{lino2}
{\sc B. Linowitz} and {\sc T. Shemanske}, \textit{Embedding orders in central simple algebras}, Journal de théorie des nombres de Bordeaux \textbf{24} (2012), 405-424.




\end{thebibliography}
\end{document}